\newtheorem*{thma}{Theorem A}
\newtheorem*{thmb}{Theorem B}
\newtheorem{theorem}{Theorem}[section]
\newaliascnt{lemma}{theorem}
\newtheorem{lemma}[lemma]{Lemma}
\newaliascnt{proposition}{theorem}
\newaliascnt{corollary}{theorem}
\theoremstyle{definition}
\newaliascnt{definition}{theorem}
\newaliascnt{example}{theorem}
\newaliascnt{exercise}{theorem}
\newaliascnt{question}{theorem}
\newaliascnt{problem}{theorem}
\theoremstyle{remark}
\newaliascnt{remark}{theorem}
\newaliascnt{notation}{theorem}
\newaliascnt{fact}{theorem}
\numberwithin{equation}{theorem}%
\numberwithin{figure}{theorem}
\newcommand{\Cal}[1]{\mathcal{#1}}
\renewcommand{\Bbb}[1]{\mathbb{#1}}
\newcommand{\injrad}{\operatorname{injrad}}
\newcommand{\conj}{\operatorname{conj}}
\newcommand{\econj}{\operatorname{conj}^\varepsilon}
\begin{document}
\title[Conjugate point and geodesic loop]{On conjugate points and 
geodesic loops in a complete Riemannian manifold}
\author{Shicheng Xu}
\date{\today}
\address{School of Mathematics, Capital Normal University,
Beijing, China}
\email{shichengxu@gmail.com}
\thanks{{\it Keywords}: geodesic, cut point, conjugate point, 
injectivity radius}
\thanks{{\it 2000 MSC:} Primary 53C22; Secondary 53C20}
\thanks{Project 11171143 supported by National Natural Science 
Foundation of China}
\begin{abstract}
A well-known Lemma in Riemannian geometry by Klingenberg says 
that if $x_0$ is a minimum point of the distance function 
$d(p,\cdot)$ to $p$ in the cut locus $C_p$ of $p$,
then either there is a minimal geodesic from $p$ to $x_0$ along which 
they are conjugate, or there is a geodesic loop at $p$ that smoothly 
goes through $x_0$. In this paper, we prove that: for any point $q$ 
and any local minimum point $x_0$ of 
$F_q(\cdot)=d(p,\cdot)+d(q,\cdot)$ in $C_p$, 
either
$x_0$ is conjugate to $p$ along each minimal geodesic connecting 
them, or there is a geodesic from $p$ to $q$ passing through $x_0$. 
In particular, for any local minimum point $x_0$ of 
$d(p,\cdot)$ in $C_p$, either $p$ and $x_0$ are conjugate along 
every minimal geodesic from $p$ to $x_0$, or there is a geodesic loop 
at $p$ that smoothly goes through $x_0$. Earlier results based on 
injective radius estimate would hold under weaker conditions.
\end{abstract}

\maketitle
\section{Introduction}

Let $M$ be a complete Riemannian manifold. For a point $p\in M$, let 
$T_pM$ be the tangent space at $p$ and $\exp_p:T_pM\to M$ be the 
exponential map. For any unit 
vector $v\in T_pM$, let $\sigma(v)$ be the supremum of $l$ such that
the geodesic $\exp tv:[0,l]\to M$ is minimizing, $\kappa(v)$ be the 
supremum of $s$ such that there is no conjugate point of $p$ along 
$\exp_p tv:[0,s)\to M$.
Let $$\tilde C_p=\{\sigma(v)v:
\text{ for all unit vector $v\in T_pM$}\}$$ be the {\it tangential 
cut locus} of $p$, 
$$\tilde J_p=\{\kappa(v)v: \text{ for all unit vector $v\in T_pM$}\}$$
be the {\it tangential conjugate locus},
and $C_p=\exp \tilde C_p\subset M$ be the {\it cut locus} of $p$. Let 
$$\tilde D_p=\{tv\,|\, 0\le t<\sigma(v), 
\text{ for all unit vector }v\in T_pM\}$$
be the maximal open domain of the origin in $T_pM$ such that the 
restriction 
$\exp_p|_{\tilde D_p}$ of $\exp_p$ on $\tilde D_p$ is injective. 
Any geodesic throughout the paper is assumed to be parametrized by 
arclength. A geodesic $\gamma:[0,l]\to M$ is called a {\it 
geodesic loop at $p$} if $\gamma(0)=\gamma(l)=p$.
Let $d(\cdot,\cdot)$ be the Riemannian distance function on $M$.
In \cite{Klingenberg1959sphere} Klingenberg proved a lemma that is
well-known now in Riemannian geometry and particularly useful
in injectivity radius estimate.

\begin{lemma}[Klingenberg \cite{Klingenberg1959sphere, 
Klingenberg1995riemannian}]\label{ lem:OldKL}
If $x_0\in C_p$ satisfies that $d(p,x_0)=d(p,C_p)$,
then either there is a minimal geodesic from $p$ to $x_0$ along which 
they are conjugate, or there exist exactly two minimal geodesics from 
$p$ to $x_0$ that form a geodesic loop at $p$ smoothly passing 
through $x_0$.
\end{lemma}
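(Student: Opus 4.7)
The plan is to assume $x_0$ is not conjugate to $p$ along any minimal geodesic from $p$ to $x_0$ and derive that exactly two such geodesics exist and fit together into a smooth loop at $p$ through $x_0$. First I would invoke the classical dichotomy at cut points: a cut point $x_0$ admitting a unique minimal geodesic $\gamma$ from $p$ must be conjugate to $p$ along $\gamma$. (Proof sketch: by the inverse function theorem, non-conjugacy makes $\exp_p$ a diffeomorphism on a neighborhood of $\sigma(\dot\gamma(0))\dot\gamma(0)$, which would allow extending $\gamma$ as the unique minimizer slightly past $\sigma(\dot\gamma(0))$, contradicting the definition of $\sigma$.) Under the working hypothesis there are therefore at least two distinct minimal geodesics $\gamma_1,\gamma_2$ from $p$ to $x_0$; write $\ell = d(p,x_0) = d(p,C_p)$, $v_i = \ell\dot\gamma_i(0)\in T_pM$, and $u_i=\dot\gamma_i(\ell)\in T_{x_0}M$. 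Compactness of $\exp_p^{-1}(x_0)\cap\{|v|=\ell\}$ together with the isolation of each $v_i$ under non-conjugacy shows the full set of minimal geodesics is finite, say $\gamma_1,\dots,\gamma_k$ with $k\geq 2$.

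The heart of the argument is to show every pair satisfies the antipodality $u_i=-u_j$. Local diffeomorphy of $\exp_p$ at each $v_i$ yields smooth branch length functions $f_i$ on a neighborhood $U$ of $x_0$ with $f_i(x_0)=\ell$ and, by the first variation of arclength, $\nabla f_i(x_0)=u_i$. A standard compactness argument gives $d(p,y)=\min_{1\le i\le k} f_i(y)$ on $U$. Now suppose $u_1+u_2\neq 0$. Then $N=\{f_1=f_2\}$ is a smooth hypersurface through $x_0$ whose tangent at $x_0$ is $(u_1-u_2)^\perp$, and the intrinsic gradient of $f_1|_N$ at $x_0$ equals $\tfrac12(u_1+u_2)\neq 0$. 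Moving along a curve $c\colon[0,\varepsilon)\to N$ with initial tangent $w$ satisfying $\langle w,u_1+u_2\rangle<0$ strictly decreases $f_1=f_2$ below $\ell$; each $c(t)$ then carries two distinct geodesics from $p$ of common length $f_1(c(t))<\ell$, and if these realize $d(p,c(t))$, then $c(t)\in C_p$ with $d(p,c(t))<\ell$, contradicting the minimality of $x_0$ on $C_p$ and so forcing $u_1+u_2=0$.

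Antipodality of every pair $u_i=-u_j$ forces $k\leq 2$, since a third $\gamma_3$ would give $u_2=-u_1=u_3$ and therefore $\gamma_2=\gamma_3$; combined with $k\geq 2$, exactly two minimal geodesics exist and they assemble into a smooth geodesic loop at $p$ through $x_0$. The main obstacle I anticipate lies in the clause "if these realize $d(p,c(t))$" above: when $k\geq 3$ one must additionally arrange that $f_j\geq f_1=f_2$ along $c$ for every $j\geq 3$, otherwise a fastest-decreasing branch $f_j$ would make $c(t)$ a non-cut point. I would handle this by a Gordan-type alternative applied to the family $\{u_i\}_{i=1}^k$: in the generic case there is a direction strictly negative against every $u_i$, which can be perturbed into $N$ while preserving the ordering $f_j\geq f_1$; in the degenerate case that $0$ lies in the convex hull of $\{u_i\}$, the configuration is rigid enough that the loop structure can be read off directly by combining the pairwise conclusion of the previous paragraph with the finiteness of $k$.
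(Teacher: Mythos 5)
Your overall strategy is the classical first-variation argument for Klingenberg's lemma (the paper itself only cites this statement; its own machinery, the lifting of minimal geodesics into the tangential segment domain $\tilde D_p$, is used for the generalization in Theorem~A, not for this lemma). Most of your steps are sound: the dichotomy at cut points, finiteness of the minimal geodesics under non-conjugacy, the smooth branch functions $f_i$ with $\nabla f_i(x_0)=u_i$, and the reduction of ``exactly two'' to pairwise antipodality are all correct. The genuine gap is exactly where you flag it: the clause ``if these realize $d(p,c(t))$'' is never discharged. Your proposed repair via a ``Gordan-type alternative'' is not a proof as written --- the ``degenerate case'' in which $0$ lies in the convex hull of $\{u_i\}$ is left to an unsubstantiated claim that ``the loop structure can be read off directly,'' and even in the generic case you would still have to verify that the perturbed direction keeps $f_j\ge f_1$ for all $j\ge 3$ along the whole curve $c$, not just infinitesimally. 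So the argument, as it stands, does not close.

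The fix is much simpler than what you propose, and it uses the global hypothesis $d(p,x_0)=d(p,C_p)=\ell$ directly: \emph{every} geodesic emanating from $p$ of length $<\ell$ is minimizing, because a non-minimizing one would contain the cut point of $p$ along it at parameter (hence at distance from $p$) strictly less than $\ell$, contradicting $d(p,C_p)=\ell$. Consequently, at $c(t)$ the two branch geodesics of lengths $f_1(c(t)),f_2(c(t))<\ell$ are automatically both minimizing --- in particular the scenario $f_j(c(t))<f_1(c(t))$ for a third branch cannot occur at all, since it would make the branch-$1$ geodesic non-minimizing of length $<\ell$. Two distinct minimal geodesics from $p$ to $c(t)$ force $c(t)\in C_p$ with $d(p,c(t))<\ell$, the desired contradiction; no Gordan alternative, and in fact no restriction to the hypersurface $N=\{f_1=f_2\}$ or the formula $d(p,\cdot)=\min_i f_i$, is needed --- any direction $w$ with $\langle w,u_1\rangle<0$ and $\langle w,u_2\rangle<0$ (which exists precisely when $u_1\neq -u_2$) suffices. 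With that observation inserted, your proof is complete and coincides with the standard one; note it is logically independent of the paper's lifting argument, which is designed to handle the more general situation of Theorem~A where conjugacy may fail only along a single minimal geodesic.
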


Recently this lemma was generalized to the case of two 
points (\cite{InnamiShiohamaSoga2012pole}). Let $p,q$ be two points 
in a complete 
Riemannian manifold $M$ such that $C_p\neq \emptyset$ and $q\not\in 
C_p$. Let
$F_{p;q}:C_p\to \Bbb R$ be a function defined on $C_p$ by
$F_{p;q}(x)=d(p,x)+d(x,q)$. 
Innami, Shiohama and Soga proved in \cite{InnamiShiohamaSoga2012pole} 
that

\begin{lemma}[\cite{InnamiShiohamaSoga2012pole}] 
\label{lem:OldGeneralization}
If $\tilde J_p\cap \tilde C_p=\emptyset$, then for any minimum 
point $x_0\in C_p$ of $F_{p;q}(x)=d(p,x)+d(x,q)$, 
there exist a geodesic (and at most two) $\alpha:[0,F_{p;q}(x_0)]\to 
M$ from $p$ to $q$ such that $\alpha(d(p,x_0))=x_0$.
\end{lemma}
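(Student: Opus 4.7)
The plan is to show that every minimal geodesic from $x_0$ to $q$ is the smooth continuation of one of the minimal geodesics from $p$ to $x_0$, so that the concatenation is the desired $\alpha:[0,F_{p;q}(x_0)]\to M$ from $p$ to $q$ through $x_0$. I would extract this from a first-variation argument for $F_{p;q}$ along the cut locus.

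First, the hypothesis $\tilde J_p\cap \tilde C_p=\emptyset$ forces $x_0$ to be non-conjugate to $p$ along every minimal geodesic, so the standard fact that a non-conjugate cut point carries at least two minimizing geodesics yields $\gamma_1,\ldots,\gamma_k$ ($k\geq 2$) minimal from $p$ to $x_0$. Write $d_0=d(p,x_0)$, $u_i=\gamma_i'(d_0)\in T_{x_0}M$, and let $w_1,\ldots,w_l$ be the initial velocities at $x_0$ of the minimal geodesics $\beta_j$ from $x_0$ to $q$; the goal becomes to prove that some $w_j$ equals some $u_i$. Because $\exp_p$ is a local diffeomorphism at each $d_0\gamma_i'(0)$, on a neighborhood $U$ of $x_0$ there exist smooth functions $\rho_i$ with $\rho_i(x_0)=d_0$, $\nabla\rho_i(x_0)=u_i$, and $d(p,\cdot)=\min_i\rho_i$, and $C_p\cap U$ equals the locus where this minimum is attained at two or more indices.

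Second, for a $C^1$ curve $y(t)\subset C_p$ with $y(0)=x_0$ and $y'(0)=v$, the one-sided derivative of $F_{p;q}(y(t))$ at $t=0$ is
\[
\min_i\langle v,u_i\rangle-\max_j\langle v,w_j\rangle,
\]
and must be $\geq 0$ because $x_0$ is a local minimum of $F_{p;q}|_{C_p}$. In the generic case $k=2$, $C_p\cap U$ is the smooth hypersurface $H=\{\rho_1=\rho_2\}$, so any $v\in T_{x_0}H=(u_1-u_2)^\perp$ is admissible together with $-v$. Applying the inequality to both $v$ and $-v$ collapses the min/max terms and forces $\langle v,w_j\rangle=\langle v,u_1\rangle=\langle v,u_2\rangle$ for every such $v$ and every $j$, so $w_j-\tfrac{u_1+u_2}{2}\in\operatorname{span}(u_1-u_2)$. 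Writing $w_j=\tfrac{u_1+u_2}{2}+\lambda_j(u_1-u_2)$ and using $\langle u_1+u_2,u_1-u_2\rangle=0$ together with $|w_j|=|u_1|=|u_2|=1$ reduces to $\lambda_j^2=\tfrac14$, whose roots $\pm\tfrac12$ yield $w_j=u_1$ or $w_j=u_2$. Thus each $\beta_j$ continues $\gamma_1$ or $\gamma_2$ smoothly, producing $\alpha$ and the upper bound of two.

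The main obstacle will be the case $k\geq 3$. There $C_p\cap U$ is a union of portions of the pairwise loci $H_{ij}=\{\rho_i=\rho_j\}$ and its tangent cone at $x_0$ is not a linear subspace, so the symmetric ``$\pm v$'' step of the generic case is not directly available. My plan is to argue pairwise: each pair $\{i,i'\}$ contributes a half-hyperplane of admissible directions and a one-sided variational inequality $\langle v,u_i\rangle\geq\max_j\langle v,w_j\rangle$; combining these over all facets of the tangent cone of $C_p$ and exploiting the convex geometry of the concave piecewise-linear function $v\mapsto\min_i\langle v,u_i\rangle$ should still force each $w_j$ to lie in $\{u_1,\ldots,u_k\}$, while the unit-length identity $|w_j|=1$ continues to cap the number of distinct continuations at two.
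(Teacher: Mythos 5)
Your argument for the case $k=2$ is essentially sound (granting the standard structure lemma that, under $\tilde J_p\cap\tilde C_p=\emptyset$, one has $d(p,\cdot)=\min_i\rho_i$ near $x_0$ with finitely many smooth branches and $C_p\cap U$ equal to the two-or-more-branches locus; this needs the compactness argument that all minimal geodesics to points near $x_0$ are close to the $\gamma_i$, but it is routine). Note this is a genuinely different route from the paper, which argues globally by lifting the minimal geodesics $[q\,x]$, for $x$ an interior point of a minimal geodesic $[p\,x_0]$, into the tangential segment domain $\tilde D_p$ (they avoid $C_p$ because $F_{p;q}<\min_{C_p}F_{p;q}$ along them) and then passing to the limit, the hypothesis $\tilde J_p\cap\tilde C_p=\emptyset$ keeping the lifts away from $\tilde J_p$.

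The genuine gap is the case $k\ge 3$, which you only sketch, and the sketch as stated aims at the wrong target. The inequality you have available on a pair-facet $T_{i'j'}=\{v:\langle v,u_{i'}\rangle=\langle v,u_{j'}\rangle\le\langle v,u_l\rangle\ \forall l\}$ is one-sided only, and it interacts badly with the conclusion you want: if some $w_j$ equals some $u_i$ and $v$ lies in the relative interior of a facet $T_{i'j'}$ with $i\notin\{i',j'\}$, then along a curve in $C_p$ with velocity $v$ (such curves do exist for relative-interior facet directions, since the pair locus $\{\rho_{i'}=\rho_{j'}\}$ is a smooth hypersurface and the strict inequalities persist) the one-sided derivative of $F_{p;q}$ is at most $\langle v,u_{i'}\rangle-\langle v,u_i\rangle<0$, contradicting minimality. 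So for $k\ge3$ the first-order conditions can never deliver ``each $w_j\in\{u_1,\dots,u_k\}$''; what must actually be proved is that a minimum with $k\ge3$ forces the relevant facets to have no realizable interior directions (for $k=3$ this is impossible because three distinct unit vectors cannot be affinely collinear, so one must show no minimum occurs at all; for $k\ge4$ the facet cones can degenerate and the analysis of which tangent directions of $C_p$ are realized by actual curves becomes the crux). None of this case analysis, nor the realizability argument it requires, is in your proposal, so as written the proof covers only $k=2$ and the plan for $k\ge3$ would not go through in the form described.
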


Note that if $p=q$, then Lemma~\ref{lem:OldGeneralization} is 
reduced to the case of Klingenberg's Lemma. In this paper, we improve
both results in the above to the following theorems
whose constraints are sharp in general.

\begin{thma}[Generalized Klingenberg's Lemma]
\label{thma:NewGeneralization}
Let $M$ be a complete Riemannian manifold and $p,q\in M$ such that
$C_p\neq \emptyset$ and $q\not\in C_p$.
Let $x_0\in C_p$ such that $F_{p;q}(x_0)=d(p,x_0)+d(q,x_0)$ is local 
minimum of $F_{p;q}$ in $C_p$. 
Then either $p$ and $x_0$ are conjugate along every minimal geodesic 
connecting them, or there is 
a geodesic (and at most two) $\alpha:[0,F_{p;q}(x_0)]\to 
M$  from $p$ to $q$ such that $\alpha(d(p,x_0))=x_0$.
\end{thma}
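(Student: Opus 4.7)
I argue the contrapositive of the alternative. Assume that a minimal geodesic $\gamma_1\colon[0,\ell]\to M$ from $p$ to $x_0$ exists along which $p,x_0$ are not conjugate, and that no geodesic from $p$ through $x_0$ to $q$ exists; the goal is a contradiction with local minimality of $F_{p;q}|_{C_p}$ at $x_0$. For the exposition I first handle the cleanest case: every minimal geodesic from $p$ to $x_0$ is non-conjugate, and the minimal geodesic $\beta\colon[0,L]\to M$ from $x_0$ to $q$ is unique, with tangent $u:=\dot\beta(0)$.

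\textbf{Local branches and first variation.} Non-conjugacy makes $\exp_p$ a local diffeomorphism at each preimage of $x_0$ realized by a minimal geodesic, so only finitely many minimal geodesics $\gamma_1,\dots,\gamma_k$ ($k\ge 2$) connect $p$ to $x_0$, with distinct unit tangents $v_i=\dot\gamma_i(\ell)\in T_{x_0}M$. Inverse branches of $\exp_p$ yield smooth functions $d_1,\dots,d_k$ on a neighborhood $U\ni x_0$ satisfying $\nabla d_i(x_0)=v_i$ and $d(p,\cdot)=\min_i d_i$, while $d(q,\cdot)$ is smooth near $x_0$ with gradient $-u$. For $\xi\in T_{x_0}M$ and $x(t)=\exp_{x_0}(t\xi)$ the first variation reads
\[
F_{p;q}(x(t))-F_{p;q}(x_0)=t\min_i\langle v_i-u,\xi\rangle+O(t^2).
\]
Under the assumption $u\ne v_i$ for all $i$, it suffices to exhibit $\xi$ tangent to a $C^1$ curve in $C_p$ with $\min_i\langle v_i-u,\xi\rangle<0$.

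\textbf{Tangent cones and polar constraints.} The cut locus $C_p\cap U$ is the stratified set where $\min_i d_i$ is attained by at least two indices; its top-dimensional strata are indexed by pairs $i\ne j$ and lie in the smooth hypersurfaces $\Sigma_{ij}=\{d_i=d_j\}$. The tangent cone at $x_0$ to the stratum for $(i,j)$ is
\[
K_{ij}=\{\xi\in T_{x_0}M:\langle v_i-v_j,\xi\rangle=0,\ \langle v_i-v_m,\xi\rangle\le 0\ \text{for all }m\},
\]
and every $\xi\in K_{ij}$ is tangent to a $C^1$ curve in $C_p$. Local minimality then forces $\langle v_i-u,\xi\rangle\ge 0$ for every $\xi\in K_{ij}$, i.e.\ $v_i-u$ lies in the polar cone
\[
K_{ij}^*=\Bigl\{\sum_m\alpha_m(v_m-v_i):\alpha_m\ge 0\text{ for }m\ne j,\ \alpha_j\in\mathbb{R}\Bigr\}
\]
by Farkas' lemma, and this set is contained in $\mathrm{span}\{v_m-v_i\}_m$. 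Hence $u$ lies in the affine hull of $\{v_1,\dots,v_k\}$; writing $u=\sum_m\beta_m v_m$ with $\sum_m\beta_m=1$, the constraint $v_i-u\in K_{ij}^*$ unpacks to $\beta_m\le 0$ for every $m\notin\{i,j\}$.

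\textbf{Contradiction, counting, and extensions.} If $k\ge 3$, ranging over all pairs $(i,j)$ forces $\beta_m\le 0$ for every $m$, contradicting $\sum_m\beta_m=1$. If $k=2$, the representation $u=(1-\beta_2)v_1+\beta_2 v_2$, combined with $|u|=|v_1|=|v_2|=1$ and $v_1\ne v_2$, forces $\beta_2\in\{0,1\}$, so $u\in\{v_1,v_2\}$, again contradicting $u\ne v_i$. Either way some $v_i=u$, and $\gamma_i\cup\beta$ is the desired geodesic from $p$ through $x_0$ to $q$; the ``at most two'' count follows because distinct minimal geodesics from $x_0$ to $q$ have distinct initial tangents (each matching at most one $v_i$) and the analysis forces $k\le 2$ whenever a matching exists with room to spare. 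The main technical hurdle is the tangent-cone identification: realizing each $\xi$ in the relative interior of $K_{ij}$ by an actual $C^1$ curve in $C_p$ requires propagating the infinitesimal inequalities $\langle v_i-v_m,\xi\rangle<0$ to genuine strict inequalities $d_i(x(t))<d_m(x(t))$ for small $t>0$. Finally, non-uniqueness of $\beta$ is handled by running the argument for each minimal $\beta$, and the mixed case where some $\gamma_i$ has $p,x_0$ conjugate is handled by restricting $\{d_i\}$ to the non-conjugate branches while using the upper bound $d(p,\cdot)\le d_i$ on these branches to preserve the inequalities that drive the contradiction.
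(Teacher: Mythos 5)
Your approach is genuinely different from the paper's. The paper never looks at the local structure of $C_p$ near $x_0$: after reducing to a global minimum with a unique $\gamma$ from $q$ to $x_0$, it lifts the nearby minimal geodesics $\gamma_s$ from $q$ to interior points of $\alpha$ (and of $\beta$) into the tangential segment domain $\tilde D_p\subset T_pM$, and derives the conclusion from the uniqueness and convergence of these lifts, anchored at $w=d(p,x_0)\alpha'(0)$ which is a \emph{regular} point of $\exp_p$ precisely because $\alpha$ is non-conjugate. Your proposal instead works entirely in $T_{x_0}M$, treating $C_p$ near $x_0$ as a stratified set carved out by finitely many smooth branches $d_i$ of the distance function, and combines a first-variation inequality with a polar-cone/Farkas argument. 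The two are essentially dual viewpoints (lifting at the $p$-end versus first variation at the $x_0$-end), and in the ``clean'' $k=2$ case your computation is correct and attractive. The paper's route, however, is designed to sidestep exactly the regularity issues your route has to confront head-on.

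There are real gaps in the proposal. The most serious is the mixed case, which you defer to the last sentence: the theorem only assumes \emph{one} non-conjugate minimal geodesic, and if some minimal geodesics from $p$ to $x_0$ are conjugate there may be infinitely many of them, $d(p,\cdot)$ need not equal $\min_i d_i$ over the non-conjugate branches, and $C_p$ near $x_0$ need not be a stratified set of hypersurfaces $\Sigma_{ij}$ at all. Your suggestion to ``restrict $\{d_i\}$ to the non-conjugate branches'' gives only $d(p,\cdot)\le d_i$, which yields an upper bound on $F_{p;q}$ along a test curve, but the construction of a test curve \emph{in} $C_p$ with the right tangent $\xi$ presupposes the very stratification that fails here. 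Second, your $k\ge 3$ step (``ranging over all pairs forces $\beta_m\le 0$ for every $m$'') implicitly assumes the representation $u=\sum_m\beta_m v_m$ with $\sum_m\beta_m=1$ is unique, i.e.\ that the $v_m$ are affinely independent; when $k>n+1$ or in degenerate configurations the coefficients depend on the pair $(i,j)$ and cannot be combined. It also assumes every pair $(i,j)$ produces a nontrivial constraint, but $K_{ij}$ may equal $\{0\}$ (making $K_{ij}^*$ the whole space and the constraint vacuous). Third, the tangent-cone realization that you yourself flag as the ``main technical hurdle'' is genuinely needed to turn infinitesimal inequalities into curves in $C_p$, and it is not supplied. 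Finally, the ``at most two'' count is asserted rather than argued. In contrast, the paper's lift-based argument needs only the regularity of $\exp_p$ at the single vector $w$, is insensitive to how wild $C_p$ or the conjugate set is near $x_0$, and produces the ``at most two'' bound directly from the dichotomy of Lemma~\ref{lem:propertyoflift}.
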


\begin{thmb}[Improved Klingenberg's Lemma]
\label{thmb:ImprovedKL}
Let $M$ be a Riemannian manifold and $p$ be a point in $M$. Let 
$x_0\in C_p$ such that $d(p,x_0)$ is a local minimum of $d(p,\cdot)$  
in $C_p$. If there is a minimal geodesic from $p$ to $x_0$ along 
which $p$ is not conjugate to $x_0$, then there are exactly two 
minimal geodesics from $p$ to $x_0$ that form a whole geodesic 
smoothly passing through $x_0$. 

Moreover, if $d(p,x_0)$ is also a 
local minimum of $d(x_0,\cdot)$ in $C_{x_0}$, then the two minimal 
geodesics form a closed geodesic.
\end{thmb}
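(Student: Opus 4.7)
The plan is to deduce Theorem B from Theorem A by two applications of it, first with $q=p$ and then with the roles of $p$ and $x_0$ exchanged. Since $F_{p;p}(x)=2d(p,x)$, any local minimum of $d(p,\cdot)$ in $C_p$ is automatically a local minimum of $F_{p;p}$ in $C_p$, and the hypothesis $q\notin C_p$ is vacuous when $q=p$. Theorem A therefore provides, under the assumption that the conjugate alternative fails, a geodesic $\alpha:[0,2L]\to M$ from $p$ to $p$ with $\alpha(L)=x_0$, where $L=d(p,x_0)$; that is, a geodesic loop at $p$ passing smoothly through $x_0$.

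From $\alpha$ I would extract two minimal geodesics from $p$ to $x_0$ by setting $\alpha_1(t)=\alpha(t)$ and $\alpha_2(t)=\alpha(2L-t)$ for $t\in[0,L]$. Each has length $L$ and is therefore minimal. The two are distinct, for if they coincided one would get $\alpha_2'(L)=\alpha_1'(L)$ while by construction $\alpha_2'(L)=-\alpha_1'(L)$, contradicting unit speed. Combined with the ``at most two'' clause of Theorem A this shows that there are exactly two such minimal geodesics, and by construction they fit together into the smooth geodesic $\alpha$ through $x_0$. This establishes the first assertion.

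For the moreover part, I would apply Theorem A at $x_0$, taking $q=x_0$. The cut-locus relation is symmetric, so $p\in C_{x_0}$; the added hypothesis is that $d(x_0,p)$ is a local minimum of $d(x_0,\cdot)$ in $C_{x_0}$; and conjugacy along a geodesic is preserved under reversing its parametrization, so some minimal geodesic from $x_0$ to $p$ still has no conjugate points. Theorem A therefore yields a geodesic loop $\beta$ at $x_0$ of length $2L$ smoothly passing through $p$, whose two minimizing halves are, by the ``at most two'' conclusion from the first application, precisely the reverses $\bar\alpha_1,\bar\alpha_2$ of $\alpha_1,\alpha_2$. Smoothness of $\beta$ at $p$ reads $\bar\alpha_1'(L)=-\bar\alpha_2'(L)$, and since $\bar\alpha_i'(L)=-\alpha_i'(0)$, this amounts to $\alpha_1'(0)=-\alpha_2'(0)$. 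Together with the smoothness of $\alpha$ at $x_0$, which is $\alpha_1'(L)=-\alpha_2'(L)$, this forces $\alpha'(0)=\alpha'(2L)$, so $\alpha$ closes up into a smooth closed geodesic through $p$ and $x_0$.

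The whole argument is a formal consequence of Theorem A, so I do not expect any serious geometric obstacle; the only place where care is required is the sign bookkeeping when reversing parametrizations, since one must correctly translate the phrase ``$\beta$ smoothly passes through $p$'' into a condition on the initial tangents $\alpha_1'(0)$ and $\alpha_2'(0)$. Once that is handled the closing condition $\alpha'(0)=\alpha'(2L)$ falls out immediately.
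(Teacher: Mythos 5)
Your plan — derive Theorem B by applying Theorem~A once with $q=p$ and once more with the roles of $p$ and $x_0$ exchanged (i.e.\ at $x_0$ with $q=x_0$) — is exactly how the paper treats Theorem~B: the author states that Theorem~B is an ``immediate corollary'' of Theorem~A and gives no further details. Your sign-bookkeeping for the ``moreover'' part is also correct: reading off smoothness of the second loop at $p$ yields $\alpha_1'(0)=-\alpha_2'(0)$, which together with smoothness at $x_0$ closes $\alpha$ up into a smooth closed geodesic.

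One step is slightly looser than it should be: you claim that the ``at most two'' clause of Theorem~A, combined with the two halves $\alpha_1,\alpha_2$ extracted from the loop, yields that $\alpha_1,\alpha_2$ are the \emph{only} minimal geodesics from $p$ to $x_0$. But when $q=p$, the ``at most two'' clause bounds the number of parametrized geodesics $[0,2L]\to M$ from $p$ to $p$ through $x_0$; a single loop already accounts for two of these (namely $\alpha$ and its time-reversal), so this clause by itself does not forbid a third minimal geodesic from $p$ to $x_0$ that simply fails to close into a loop. To finish, one should instead apply Theorem~A again after sliding $q$ from $p$ toward $x_0$ along such a hypothetical third minimal geodesic (this keeps $x_0$ a local minimum of $F_{p;q}$ by the same triangle-inequality estimate as $(\ref{lem:propertyoflift}.4)$--$(\ref{lem:propertyoflift}.5)$); Theorem~A then produces a fourth minimal geodesic extending the third past $x_0$, giving a second geodesic loop and hence four parametrized loops at $p$ through $x_0$, contradicting the ``at most two'' clause with $q=p$. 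Alternatively, this is exactly what the interior of the paper's proof of Theorem~A establishes (every minimal geodesic from $p$ to $x_0$ other than $\alpha$ must form a whole geodesic with $\gamma$), so the conclusion is correct but does need that extra step rather than falling out of the black-box statement alone. Apart from that small gap, the proposal is sound and matches the paper's intent.
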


Motivated by Theorem~B, we will call a cut 
point $q$ of $p$ an {\it essential conjugate} 
point 
of $p$ if $p$ is conjugate to $q$ along every minimal geodesic 
connecting them. Geodesic loops may not exist if there are essential 
conjugate points. For example, a complete noncompact Riemannian 
manifold of positive sectional curvature always contains a {\it 
simple} point $p$ (i.e. there is no geodesic loop at $p$) whose cut 
locus is nonempty \cite{GromollMeyer1969positive}. By Theorem~B, 
every local minimum point of $d(p,\cdot)$ in $C_p$ must be 
essentially conjugate to $p$. 

The conjugate radius had been involved in the injectivity radius 
estimate besides the length of geodesic loops 
(see \cite[Lemma~4]{Klingenberg1959sphere} or 
\cite[Lemma~1.8]{Abresch1997injectivity}).
Recall that the {\it conjugate radius} at $p$ is defined by
$$\conj (p)=\min\{\kappa(v)\,|\, 
\text{ for all unit vector $v\in T_pM$}\}$$
and the {\it conjugate radius}
of $M$, $\conj(M)=\inf_{p\in M}\conj(p).$
The {\it injectivity radius} of $p$ is defined by 
$$\injrad(p)=\min\{\sigma(v)\,|\, 
\text{ for all unit vector $v\in T_pM$}\} $$
and {\it injectivity radius} of $M$, $\injrad(M)=\inf_{p\in 
M}\injrad(p)$.
It follows from Theorem~B that 
the conjugate radius in the injectivity radius 
estimate can be replaced by the distance from $p$ to its
essential conjugate points.
Let $J^\varepsilon_p=C_p\setminus\exp_p(\tilde C_p\setminus \tilde 
J_p))$ be the the set consisting of all essential conjugate points of 
$p$. Let $$\econj(p)=\begin{cases}
d(p,J^\varepsilon_p) & \text{if } J^\varepsilon_p\neq\emptyset,\\
+\infty &\text{otherwise}.
\end{cases}$$ and
$\econj(M)=\min_{p\in M} \econj(p)$.
Then the injectivity radius can be expressed in the following way, 
where non-essential conjugate points are covered by geodesic loops.
\begin{theorem}\label{thm:injrad}
\begin{align*}
&\injrad(p)
=\min\left\{\begin{array}{l}
\econj(p), \\
\text{half length of the shortest geodesic loop at $p$}
\end{array}\right\}; \\
&\injrad(M)=\min\left\{\begin{array}{l}
\econj(M), \\
\text{half length of the shortest closed geodesic in $M$}
\end{array}\right\}.
\end{align*}
\end{theorem}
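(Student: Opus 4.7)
The plan is to deduce the pointwise formula directly from Theorem~B applied at a closest cut point of $p$, and then derive the global formula by taking infima and invoking the \emph{moreover} clause of Theorem~B to upgrade the resulting geodesic loop to a smooth closed geodesic. Throughout, write $L_p$ for the length of the shortest geodesic loop at $p$ (set to $+\infty$ if no such loop exists) and $L_c$ for the length of the shortest closed geodesic in $M$.

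For the pointwise formula, the easy inequality $\injrad(p)\le\min\{\econj(p),\tfrac{1}{2} L_p\}$ combines two standard observations. Since $J^\varepsilon_p\subset C_p$, we have $\injrad(p)=d(p,C_p)\le d(p,J^\varepsilon_p)=\econj(p)$. And along any geodesic loop $\gamma:[0,L]\to M$ at $p$, the first cut point of $p$ along $\gamma$ occurs at some parameter $t_0<L$; if $t_0>L/2$, the reversed loop $s\mapsto\gamma(L-s)$ would supply a path from $p$ to $\gamma(t_0)$ of length $L-t_0<t_0$, contradicting that $\gamma|_{[0,t_0]}$ is minimizing. Hence $\injrad(p)\le t_0\le L/2$.

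The reverse inequality uses Theorem~B. Assuming $C_p\neq\emptyset$ (otherwise all three quantities equal $+\infty$), pick $x_0\in C_p$ with $d(p,x_0)=d(p,C_p)=\injrad(p)$. As a global minimum of $d(p,\cdot)$ on $C_p$, the point $x_0$ is in particular a local minimum, so Theorem~B applies. Either $p$ and $x_0$ are essentially conjugate, forcing $\econj(p)\le d(p,x_0)=\injrad(p)$; or two minimal geodesics from $p$ to $x_0$ combine into a geodesic loop at $p$ of length $2\injrad(p)$, forcing $\tfrac{1}{2} L_p\le\injrad(p)$. Either way $\min\{\econj(p),\tfrac{1}{2} L_p\}\le\injrad(p)$.

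For the global formula, $\injrad(M)\le\econj(M)$ and $\injrad(M)\le\tfrac{1}{2} L_c$ are immediate from the pointwise version (any closed geodesic is a loop at each of its points). For the reverse, assume first that $\injrad(M)=\injrad(p^*)$ is attained; if $\injrad(p^*)=\econj(p^*)$ then $\econj(M)\le\injrad(M)$ directly, so suppose $\injrad(p^*)<\econj(p^*)$. Theorem~B yields two minimal geodesics between $p^*$ and a closest cut point $x_0$ that fit together to a loop $\gamma$ of length $2\injrad(M)$, smooth at $x_0$. By symmetry of the cut-locus relation $p^*\in C_{x_0}$, and the chain $d(x_0,p^*)=\injrad(p^*)=\injrad(M)\le\injrad(x_0)=d(x_0,C_{x_0})\le d(x_0,p^*)$ forces equality throughout, so $p^*$ is even a global minimum of $d(x_0,\cdot)$ on $C_{x_0}$. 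The \emph{moreover} clause of Theorem~B then makes $\gamma$ into a smooth closed geodesic, whence $\tfrac{1}{2} L_c\le\injrad(M)$. The main obstacle I foresee is treating noncompact $M$ where $\injrad(M)$ is not attained: one must extract a convergent subsequence of almost-minimizing loops $\gamma_n$ (each a concatenation of two minimal segments of bounded length) and argue that the limit is a genuine closed geodesic, which should follow from standard compactness arguments but requires some care with the base points.
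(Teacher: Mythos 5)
Your derivation of the pointwise identity from Theorem~B, and of the global identity in the case where $\injrad(M)$ is attained, is correct and is essentially the intended argument: the paper gives no separate proof of this theorem, treating it as a direct consequence of Theorem~B, and your use of the symmetry $p^*\in C_{x_0}$ together with the chain $d(x_0,p^*)=\injrad(M)\le\injrad(x_0)\le d(x_0,p^*)$ to trigger the \emph{moreover} clause is exactly the classical Klingenberg-type step that upgrades the loop to a closed geodesic.

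The genuine gap is the one you flag at the end, and it cannot be repaired by the compactness argument you sketch: if $\injrad(M)$ is not attained, the base points of your almost-minimizing loops can escape to infinity, and in fact the second identity, read literally for an arbitrary complete manifold, fails. Take $M=\mathbb{R}\times S^1$ with the warped metric $dx^2+f(x)^2\,d\theta^2$, $f(x)=1+e^{-x}$: the curvature $-f''/f$ is negative, so there are no conjugate points and $\econj(M)=+\infty$; no parallel is a geodesic since $f'<0$, and along any non-meridian geodesic $x''=f f'\,\theta'^2<0$, so $x(t)$ is strictly concave and no geodesic closes up, i.e.\ $M$ has no closed geodesic at all; yet the shortest noncontractible loop based at a point with coordinate $x$ is a geodesic loop of length at most $2\pi f(x)$, so $\injrad(M)\le\pi\lim f=\pi<+\infty$, while the right-hand side of the identity is $+\infty$. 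So the second identity must be understood for closed $M$ (the classical Klingenberg setting, which is also how it is used in the proof of the finiteness theorem, where the diameter bound forces compactness), or else with geodesic loops in place of closed geodesics, in which case it follows from your pointwise identity simply by taking infima. Once $M$ is closed, continuity of $p\mapsto\injrad(p)$ gives attainment and your argument is complete; you should make this reduction explicit rather than appeal to a limiting argument that is unavailable in the noncompact case.
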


By Theorem~\ref{thm:injrad}, in this paper we call $\econj(p)$ the 
{\it essential conjugate radius} of $p$ and $\econj(M)$ the {\it 
essential conjugate radius} of $M$. 

There have been rich results in Riemannian geometry where an upper 
sectional curvature bound $K$ shows up to offer a lower bound 
$\frac{\pi}{\sqrt{K}}$ of the conjugate radius, which now could be 
weakened to the essential conjugate radius. 
For instance, one is able to generalize 
Cheeger's finiteness theorem of diffeomorphism classes to Riemannian 
manifolds whose essential conjugate radius has a lower bound but
sectional curvature has no upper bound. 

\begin{theorem}\label{thm:finiteness}
 For any positive integer $n$, real numbers $D,v,r>0$ and 
$k$, there are only finite $C^{\infty}$-diffeomorphism classes in 
the set consisting of $n$-dimensional Riemannian manifolds
whose sectional curvature is bounded below by $k$, diameter $\le D$, 
volume $\ge v$ and essential conjugate radius $\ge r$.
\end{theorem}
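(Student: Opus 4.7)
The plan is to reduce Theorem~\ref{thm:finiteness} to a Cheeger-type finiteness statement by first showing that the hypotheses force a uniform positive lower bound on the injectivity radius. By Theorem~\ref{thm:injrad} one has
\[
\injrad(M)=\min\{\econj(M),\ell(M)/2\},
\]
where $\ell(M)$ denotes the length of the shortest closed geodesic in $M$. The hypothesis $\econj(M)\ge r$ handles the first term, so the task is to bound $\ell(M)$ away from zero in terms of $n,k,D,v$ alone.

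To bound $\ell(M)$ below, I would adapt Cheeger's short-closed-geodesic argument, observing that this portion of his injectivity radius estimate uses only a \emph{lower} sectional curvature bound: if a closed geodesic $\gamma$ of length $L$ at $p$ were very short, its midpoint $m$ would be joined to $p$ by two minimal segments meeting at $m$ at angle $\pi$, and Toponogov comparison with the $k$-space, combined with Bishop--Gromov volume comparison under $\operatorname{Ric}\ge(n-1)k$, would force the volume of $M$ to shrink relative to $D^n$ unless $L\ge L_0(n,k,D,v)>0$. Together with $\econj(M)\ge r$ this yields $\injrad(M)\ge i_0:=\min\{r,L_0/2\}>0$ uniformly across the class.

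With $\sec\ge k$, $\operatorname{diam}\le D$, $\operatorname{vol}\ge v$, $\injrad\ge i_0$, and $\econj\ge r$ uniformly available, the final step is to apply a Cheeger-type finiteness theorem, using the essential conjugate radius bound in place of the upper sectional curvature bound in Cheeger's original argument. On any ball of radius $\rho<\min(i_0,r)$ the exponential map $\exp_p$ is a diffeomorphism from $B(0,\rho)\subset T_pM$ onto $B(p,\rho)$ and has no interior conjugate points along a minimal radius; together with Toponogov-based control from the lower curvature side, this gives controlled geodesic-normal-coordinate charts on which the standard Cheeger--Gromov compactness and counting arguments for $C^\infty$-diffeomorphism types can be executed.

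The main obstacle will be this last step, where the essential conjugate radius bound must substitute for the two-sided sectional curvature bound in the finiteness machinery. A two-sided sectional bound directly controls the Hessian of $\exp_p$ in normal coordinates and yields $C^{1,\alpha}$ metric estimates; the essential conjugate radius, by contrast, only precludes conjugate points along \emph{minimal} geodesics, so one will likely need to argue more carefully, via a harmonic radius estimate tuned to the one-sided curvature setting or by a direct gluing argument, to upgrade $C^{1,\alpha}$ Gromov--Hausdorff convergence to genuine $C^\infty$-diffeomorphism finiteness within the class.
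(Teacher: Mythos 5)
Your first two steps coincide with the paper's: reduce via Theorem~\ref{thm:injrad} to bounding the length of the shortest closed geodesic, and observe that this length bound needs only $\sec\ge k$, $\operatorname{diam}\le D$, $\operatorname{vol}\ge v$. The paper simply cites Cheeger's lemma (Theorem~5.8 in Cheeger--Ebin) for a constant $c_n(D,v,k)>0$ bounding every smooth closed geodesic from below, which gives $\injrad(M)\ge\min\{r,\tfrac12 c_n(D,v,k)\}$; your Toponogov-plus-volume sketch is essentially that lemma (though the configuration you describe, two minimal segments to the midpoint meeting at angle $\pi$, is closer to the Klingenberg set-up than to Cheeger's actual argument), so this part is correct and could just be cited.

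The genuine gap is your final step, and you flag it yourself as the ``main obstacle'' without closing it. You propose to rerun Cheeger's original chart-based finiteness proof with the essential conjugate radius standing in for the upper sectional curvature bound, and to repair the loss of two-sided curvature control by a harmonic radius estimate or a $C^{1,\alpha}$ upgrade. That repair is not available in this class: harmonic radius and $C^{1,\alpha}$ metric bounds require two-sided (Ricci) curvature control, and neither $\econj(M)\ge r$ nor the derived injectivity radius bound supplies it. More importantly, it is not needed. In the paper the essential conjugate radius has finished its work once Theorem~\ref{thm:injrad} converts the closed-geodesic bound into the injectivity radius bound; after that, finiteness of $C^\infty$-diffeomorphism classes follows from the standard Gromov--Hausdorff convergence machinery for non-collapsed manifolds with only a \emph{lower} sectional curvature bound (precompactness plus the construction, as in Fukaya \cite{Fukaya1987collapsing} and Yamaguchi \cite{Yamaguchi1991collapsing}, of a diffeomorphism between two such manifolds at small Gromov--Hausdorff distance), with no upper curvature, conjugate-point, or $C^{1,\alpha}$ input at all. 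So your reduction is right, but the concluding step as you propose it would not go through, whereas the intended conclusion is obtained by invoking the lower-curvature-bound stability results rather than by adapting Cheeger's two-sided-bound argument.
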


In general $\econj(p)\neq\conj(p)$. $\mathbb{R}P^n$ with the 
canonical metric is a trivial example. An immediate question is, when 
$\econj(p)=\conj(p)$?
Next two applications of Theorem~B
offers examples where $\econj(p)$ coincides with $\conj(p)$.

\begin{theorem}\label{thm:soul}
Let a point $p$ be the soul of a complete noncompact 
Riemannian manifold of nonnegative sectional curvature in sense 
of \cite{CheegerGromoll1972soul}. Then 
either the nearest point to $p$ in $C_p$ is an essential conjugate 
point, or $\exp_p:T_pM\to M$ is a diffeomorphism.
\end{theorem}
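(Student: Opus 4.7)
The plan is to split into two cases according to whether $C_p$ is empty. If $C_p=\emptyset$, every unit-speed geodesic out of $p$ is minimizing on all of $[0,\infty)$, hence contains no point conjugate to $p$ and has no self-intersection at finite times; this makes $\exp_p$ an injective local diffeomorphism, and by completeness of $M$ it is then a global diffeomorphism $T_pM\to M$. So I may assume $C_p\neq\emptyset$ and aim to show that any cut point $x_0$ with $d(p,x_0)=d(p,C_p)$ (which exists, since closed bounded sets are compact by Hopf--Rinow) must be essentially conjugate to $p$.

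The crucial input is Cheeger--Gromoll's fact that the soul is a totally convex subset of $M$. Because in the present setting the soul is the singleton $\{p\}$, total convexity forces every geodesic segment in $M$ with both endpoints at $p$ to lie inside $\{p\}$; equivalently, \emph{$p$ admits no nontrivial geodesic loop}. This is the only non-formal ingredient, and it requires nothing beyond the definition of the soul.

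With this in hand, the plan is to apply Theorem~B at $x_0$. Since $d(p,x_0)=d(p,C_p)$ is a global (in particular local) minimum of $d(p,\cdot)$ on $C_p$, Theorem~B produces a dichotomy: either (i) $p$ and $x_0$ are conjugate along every minimal geodesic joining them, so that $x_0$ is essentially conjugate to $p$ and we are done; or (ii) the two distinct minimal geodesics from $p$ to $x_0$ fit together smoothly at $x_0$ into a nontrivial geodesic loop at $p$. Alternative (ii) is directly ruled out by the total convexity observation, so (i) must hold for every nearest cut point, completing the dichotomy asserted by the theorem.

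The main ``hard step'' is thus recognitional rather than technical: one must notice that total convexity of the zero-dimensional soul $\{p\}$ is exactly the ingredient needed to kill the geodesic-loop alternative in Theorem~B. Once this is seen, no intricate estimate is required, and the verification reduces to assembling Theorem~B with the Cheeger--Gromoll construction.
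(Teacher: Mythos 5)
Your proposal is correct and follows essentially the same route as the paper, which deduces the theorem from Theorem~B together with the Cheeger--Gromoll fact that the soul is totally convex (so the zero-dimensional soul $\{p\}$ admits no nontrivial geodesic loop). Your additional details---existence of a nearest cut point via Hopf--Rinow and the standard argument that $\exp_p$ is a diffeomorphism when $C_p=\emptyset$---simply flesh out what the paper leaves implicit.
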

By Theorem~B, Theorem~\ref{thm:soul} directly 
follows from the fact that the soul is totally convex 
(\cite{CheegerGromoll1972soul}).
For a closed Riemannian manifold $M$, the {\it radius} of $M$ is
defined by $\operatorname{rad}(M)=\min_{p\in 
M}\max\{d(p,x)\,|\, \text{ for any $x\in M$}\}.$

\begin{theorem}\label{thm:radiussphere}
Let $M$ be a complete Riemannian manifold whose sectional curvature 
$\ge 1$ and radius $\operatorname{rad}(M)>\frac{\pi}{2}$. Then 
either there are two points $p,q\in M$ of distance
$d(p,q)=d(p,C_p)=d(q,C_q)<\operatorname{rad}(M)$ and
$p,q$ are essentially conjugate to each other,
or $M$ is isometric to a sphere of constant curvature 
$\frac{\pi^2}{\operatorname{rad}^2(M)}$.
\end{theorem}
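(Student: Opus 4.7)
My plan is to argue by contradiction: assume that the second alternative fails, so $M$ is not isometric to the round sphere of constant curvature $\pi^{2}/\operatorname{rad}^{2}(M)$, and produce a pair of essentially conjugate points realizing the first alternative. Since $\sec\ge 1$ forces $M$ to be compact with $\operatorname{diam}(M)\le\pi$ by Myers' theorem, the continuous function $p\mapsto\injrad(p)=d(p,C_{p})$ attains its infimum at some $p_{0}\in M$. I will use this global injectivity radius minimizer to manufacture the pair.

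First I would pick $q_{0}\in C_{p_{0}}$ realizing $d(p_{0},q_{0})=d(p_{0},C_{p_{0}})=\injrad(M)$. The cut locus is symmetric ($q\in C_{p}\Leftrightarrow p\in C_{q}$), so $p_{0}\in C_{q_{0}}$, and the squeeze
\[
\injrad(q_{0})\le d(q_{0},p_{0})=\injrad(M)\le\injrad(q_{0})
\]
makes $p_{0}$ a closest cut point of $q_{0}$ as well. Thus $(p_{0},q_{0})$ automatically satisfies $d(p_{0},q_{0})=d(p_{0},C_{p_{0}})=d(q_{0},C_{q_{0}})=\injrad(M)$, and $(p_{0},q_{0})$ is a local minimum of $d(p_{0},\cdot)$ in $C_{p_{0}}$ as well as of $d(q_{0},\cdot)$ in $C_{q_{0}}$. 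A radius realizer $p^{*}$ yields the universal bound $\injrad(M)\le\injrad(p^{*})\le\max_{x}d(p^{*},x)=\operatorname{rad}(M)$.

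Next I would dichotomize on whether this inequality is strict. When $\injrad(M)<\operatorname{rad}(M)$, the pair $(p_{0},q_{0})$ satisfies the strict pair condition of alternative~(A). Applying the second clause of Theorem~B, either $p_{0}$ and $q_{0}$ are essentially conjugate (and we are done) or there are exactly two minimal geodesics from $p_{0}$ to $q_{0}$ fitting together into a closed geodesic $\gamma$ of length $L=2\injrad(M)<2\operatorname{rad}(M)\le 2\pi$. To rule out this last sub-case I would pick a farthest point $y$ from $p_{0}$, so $d(p_{0},y)\ge\operatorname{rad}(M)>L/2>0$, and play the two halves of $\gamma$ at $p_{0}$ against a minimal geodesic from $p_{0}$ to $y$; the two hinge angles at $p_{0}$ are supplementary by smoothness of $\gamma$, hence one is at most $\pi/2$. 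Toponogov's hinge comparison in $S^{2}(1)$ should then force $d(y,q_{0})$ to be incompatible with the triangle inequality $d(y,q_{0})\ge d(p_{0},y)-L/2$ once combined with $\operatorname{rad}(M)>\pi/2$, yielding the desired contradiction.

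The main obstacle is the remaining case $\injrad(M)=\operatorname{rad}(M)$, in which $(p_{0},q_{0})$ fails the strict inequality of~(A) and I must instead conclude that $M$ is isometric to the round sphere of curvature $\pi^{2}/\operatorname{rad}^{2}(M)$. Here $\exp_{p}$ is a diffeomorphism on the open ball of radius $\operatorname{rad}(M)>\pi/2$ for every $p\in M$, and I would combine the Grove--Shiohama radius sphere theorem (giving $M$ homeomorphic to $S^{n}$) with a sharp Toponogov comparison along radial geodesics out of a radius realizer to force equality in all triangle comparisons; Cheng's maximal-diameter rigidity (applied to the appropriately rescaled metric) then yields constant sectional curvature $\pi^{2}/\operatorname{rad}^{2}(M)$ and the claimed isometry. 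Promoting the equality $\injrad=\operatorname{rad}$ to full metric rigidity, and making the Toponogov rigidity genuinely sharp when the lower curvature bound $1$ is strictly below the target curvature $\pi^{2}/\operatorname{rad}^{2}(M)$, is where I expect the bulk of the technical work to lie.
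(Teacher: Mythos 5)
There is a genuine gap---in fact two. Note first what the paper actually does: it proves this theorem by rerunning Xia's proof of his round-sphere theorem verbatim, observing that the hypothesis $\conj(M)\ge\operatorname{rad}(M)$ enters Xia's argument only through Klingenberg's Lemma, which Theorem~B replaces. Your opening move (the injectivity-radius minimizer $p_0$, its nearest cut point $q_0$, symmetry of the cut locus, and Theorem~B giving either mutual essential conjugacy or a closed geodesic of length $2\injrad(M)$) is exactly this replacement and is fine. But you then try to reconstruct the two substantive steps of Xia's argument, and both reconstructions fail. In the case $\injrad(M)<\operatorname{rad}(M)$, your hinge argument does not exclude the closed geodesic: with sides $a=d(p_0,y)$, $b=L/2$ and angle $\theta\le\pi/2$ at $p_0$, Toponogov gives $\cos d(y,q_0)\ge\cos a\cos b+\sin a\sin b\cos\theta\ge\cos a\cos b$, i.e.\ an \emph{upper} bound $d(y,q_0)\le\arccos(\cos a\cos b)$, while the triangle inequality gives the lower bound $d(y,q_0)\ge a-b$; since $\cos(a-b)=\cos a\cos b+\sin a\sin b\ge\cos a\cos b$, one always has $a-b\le\arccos(\cos a\cos b)$, so the two bounds are compatible and no contradiction arises. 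Ruling out a closed geodesic of length $<2\operatorname{rad}(M)$ under $\sec\ge1$, $\operatorname{rad}(M)>\pi/2$ requires more than a single hinge at $p_0$ (e.g.\ exploiting criticality/maximality of the reference point along the whole closed geodesic); this is precisely the nontrivial content of Xia's paper that the present paper cites rather than reproves.

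Second, in the remaining case $\injrad(M)=\operatorname{rad}(M)$ your rigidity step is not viable as sketched. Cheng's maximal-diameter theorem needs $\mathrm{Ric}\ge n-1$ and diameter exactly $\pi$; after rescaling so that the target curvature $\pi^2/\operatorname{rad}^2(M)$ becomes $1$, the hypothesis $\sec\ge1$ only yields $\sec\ge\operatorname{rad}^2(M)/\pi^2<1$ whenever $\operatorname{rad}(M)<\pi$, so neither Cheng nor a ``sharp'' Toponogov comparison with the bound $1$ can force equality with a model of the \emph{larger} curvature $\pi^2/\operatorname{rad}^2(M)$; the equality discussion is simply not sharp for that model, and the actual rigidity mechanism in Xia's proof is different. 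You acknowledge this is where the bulk of the work lies, but as it stands the second alternative of the theorem is not established. In short: your skeleton (Theorem~B in place of Klingenberg's Lemma at the injectivity-radius minimizer) matches the paper's intended route, but the two analytic cores---excluding the short closed geodesic, and the $\injrad(M)=\operatorname{rad}(M)$ rigidity---are respectively incorrect as argued and missing, whereas the paper disposes of both by invoking Xia's published proof.
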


Xia proved (\cite{Xia2006roundsphere}) that if the manifold $M$ in 
Theorem~\ref{thm:radiussphere} satisfies $\conj(M)\ge 
\operatorname{rad}(M)>\frac{\pi}{2}$, then $M$ is isometric to a 
sphere of constant curvature. Theorem~\ref{thm:radiussphere} follows 
his proof after replacing Klingenberg's lemma to Theorem~B. In 
particular, if $\operatorname{rad}(p)= \max_{x\in 
M}d(p,x)>\frac{\pi}{2}$, then $\injrad(p)=\econj(p)=\conj(p)$.
Weaker versions of Theorem~\ref{thm:radiussphere} can also be found 
in \cite{Wang2004largeradius}. 

Theorem~A provides
a new characterization on general Riemannian manifolds,
which is an improvement of Theorem~1 in 
\cite{InnamiShiohamaSoga2012pole}. 

\begin{theorem}\label{thm:newcharacter}
Let $M$ be a complete Riemannian manifold and $p$ be a point in $M$ 
such that 
$C_p\neq\emptyset$. Then
\begin{enumerate}
\item[$(\ref{thm:newcharacter}.1)$] either $p$ has an essential 
conjugate point;
 \item[$(\ref{thm:newcharacter}.2)$] or there exist at least two 
geodesics connecting $p$ 
and every point $q\in M$ (regarding the single point $p$ as a 
geodesic when $p=q$). \end{enumerate}
\end{theorem}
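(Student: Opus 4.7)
The plan is to assume $p$ has no essential conjugate point, i.e., $J^\varepsilon_p=\emptyset$, and split according to the position of an arbitrary $q\in M$ relative to $\{p\}\cup C_p$. In each case I will produce the second geodesic from $p$ to $q$ required by (\ref{thm:newcharacter}.2) by invoking the classical Klingenberg Lemma, Theorem~B, or the new Theorem~A at a suitably chosen cut point of $p$.

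The cases \emph{$q=p$} and \emph{$q\in C_p\setminus\{p\}$} will be quick. When $q=p$, since $C_p\neq\emptyset$ is closed I pick a global (hence local) minimum $x_0\in C_p$ of $d(p,\cdot)$; Theorem~B then forces two minimal geodesics from $p$ to $x_0$ whose concatenation is a geodesic loop at $p$, giving a nontrivial geodesic from $p$ to $p$ in addition to the constant one. When $q\in C_p\setminus\{p\}$, the assumption that $q$ is not essentially conjugate supplies a minimal geodesic $\gamma$ from $p$ to $q$ along which $p$ and $q$ are not conjugate; the standard cut-point dichotomy (first conjugate along some minimizer \emph{or} multiple minimizers from $p$) then rules out the first alternative for $\gamma$ and forces the second, producing a distinct second minimal geodesic.

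The interesting case is \emph{$q\notin \{p\}\cup C_p$}, where I apply Theorem~A. First I observe that $F_{p;q}$ is continuous on the closed set $C_p$ and coercive --- the triangle inequality gives $F_{p;q}(x)\geq d(p,x)-d(p,q)$ --- so $F_{p;q}$ attains a global minimum at some $x_0\in C_p$, which is in particular a local minimum. Theorem~A combined with $J^\varepsilon_p=\emptyset$ then yields a geodesic $\alpha:[0,F_{p;q}(x_0)]\to M$ from $p$ to $q$ with $\alpha(d(p,x_0))=x_0$. To obtain \emph{two} geodesics I pair $\alpha$ with a Hopf--Rinow minimizer $\beta$ from $p$ to $q$ and check $\alpha\neq\beta$: if $\alpha$ had length $d(p,q)$, then its initial subsegment up to $x_0$ would be a minimizing geodesic from $p$ to $x_0$ that extends minimally past $x_0$, contradicting $x_0\in C_p$; hence $\alpha$ is strictly longer than $\beta$, and in particular distinct from it. The only nontrivial step is this last verification --- ruling out $\alpha=\beta$ via the elementary observation that a cut point of $p$ cannot lie in the interior of any minimizing geodesic emanating from $p$; everything else is bookkeeping around Theorems~A, B and the classical cut-locus dichotomy.
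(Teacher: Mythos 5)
Your argument is correct and is essentially the derivation the paper intends: Theorem~\ref{thm:newcharacter} is stated there as an immediate corollary of Theorem~A, and your three cases (Theorem~B for $q=p$, the classical cut-point lemma for $q\in C_p$, and Theorem~A paired with a Hopf--Rinow minimizer and the observation that a cut point cannot be interior to a minimizing geodesic for $q\notin\{p\}\cup C_p$) simply spell out that reduction. One small precision: in the case $q\in C_p$ you should quote the cut-point lemma in its per-geodesic form --- the cut point along a given minimizer $\gamma$ is either the first conjugate point along $\gamma$ or admits a second minimizing geodesic --- since non-conjugacy along one minimizer does not by itself exclude conjugacy along another; applied to your chosen $\gamma$ this yields exactly the second geodesic you need.
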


We now explain the idea of our proof of Theorem~A. Let $p,q\in M$ 
such that 
$C_p\neq\emptyset$, $q\not\in C_p$ and $x_0\in C_p$ is a minimum point 
of $F_{p;q}$ in $C_p$.
Let us consider the special case that 
there is a unique minimal geodesic $[qx_0]$ connecting $q$ and $x_0$.
A key observation from \cite{InnamiShiohamaSoga2012pole}
is that the level set of $\{F_{p;q}\le C\}$ is star-shaped at both 
$p$ and $q$. In particular, if two minimal geodesics $[px_0]$ and 
$[x_0q]$ from $p$ to $x_0$ and from $x_0$ to $q$ are broken at $x_0$, 
then for any point $x\neq x_0$ in $[px_0]$ and any minimal geodesic 
$[qx]$ connecting $q$ and $x$, we have $[qx]\cap C_p=\emptyset$. Thus 
$[qx]$ admit a unique lifting $\widetilde{[qx]}$ in $\tilde D_p$.
If two minimal geodesics, say $[px_0]_1$ and $[px_0]_2$, do not form 
a whole geodesic with $[x_0q]$ at the same time, 
then by moving $x$ to $x_0$ along $[px_0]_1$ 
and $[px_0]_2$ respectively, one may expect two 
liftings of $[qx_0]$ in $\tilde D_p$ with different endpoints when 
partial limits of $\widetilde{[qx]}$ exist.

Under the condition that $\tilde J_p\cap 
\tilde C_p=\emptyset$ as in 
Lemma~\ref{lem:OldGeneralization} and 
\cite{InnamiShiohamaSoga2012pole}, 
all such minimal geodesics 
$[px]$ are clearly
definite away from the tangential conjugate locus.
So we are able to take limit to meet a contradiction, for 
the lift $\widetilde{[qx_0]}$ of $[qx_0]$ is unique in the 
closure of $\tilde D_p$.

In the general case one does not know whether $[qx_0]$ admits a 
lifting at its endpoint in the closure of $\tilde D_p$, nor the 
liftings $\widetilde{[qx]}$ have a partial limit as $x$ approaches 
$x_0$ along $[px_0]$. 
We will prove that, if there is a minimal geodesic $\alpha$ from 
$p$ to $x_0$ along which they are not conjugate to each other, and 
the union of $\alpha$ and $[qx_0]$ is broken at 
$x_0$, then
$[qx_0]$ always has a lifting in the 
closure of $\tilde D_p$, which 
share a common endpoint with
$\tilde{\alpha}$ (see Lemma~\ref{lem:propertyoflift}). 

The uniqueness of the minimal geodesic $[qx_0]$ is 
not an essential problem, because by (\ref{lem:propertyoflift}.4) and 
(\ref{lem:propertyoflift}.5) we are always able to move $q$ 
along $[qx_0]$ while keeping $F_{p;q}$ minimal at $x_0$ in $C_p$. A 
local minimum of $F_{p;q}$ can be reduced 
to the minimum case similarly.

The detailed proof of Theorem~A will be 
given in the next section.

\section{Proof of the generalized Klingenberg's lemma}
In this section we will prove Theorem~A.
Let $M$ be a complete Riemannian manifold and $p,q$ be two points in 
$M$ such that $C_p\neq\emptyset$ and $q\not\in C_p$. Let us consider 
the function 
$$F_{p;q}:C_p\to \Bbb R, 
\quad F_{p;q}(x)=d(x,q)+d(x,p)$$ and assume that $F_{p;q}$ takes its 
minimum 
at $x_0\in C_p$. Because $q$ is not a cut point of $p$, $q\neq 
x_0$. Let $\gamma:[0,d(q,x_0)]\to M$ be a minimal geodesic from 
$q=\gamma(0)$ to $x_0=\gamma(d(q,x_0))$. Because $x_0$ is a cut 
point of $p$, for any $0\le t< d(q,x_0)$ we have
\begin{align*}
F_q(\gamma(t))&=d(q,\gamma(t))+d(\gamma(t),p)\\
&< d(q,\gamma(t))+d(\gamma(t),x_0)+d(x_0,p)\tag{$2.1$}\\ 
&=d(q,x_0)+d(x_0,p)\\
&=F_q(x_0)\\
&=\min F_q.
\end{align*} \addtocounter{theorem}{1}
Therefore $\gamma(t)$ $(0\le t< d(q,x_0))$ is not a cut point of 
$p$, 
and we are able to lift $\left.\gamma\right|_{[0,d(q,x_0))}$ to 
$(\exp_p|_{\tilde{D}_p})^{-1}\circ\gamma$ uniquely in the {\it 
tangential segment domain} $\tilde D_p\subset T_pM$,
where $\tilde D_p$ is the maximal open domain of the origin in 
$T_pM$ such that the restriction 
$\exp_p|_{\tilde D_p}$ of $\exp_p$ on $\tilde D_p$ is injective.

We now prove that, if $x_0\in \exp_p(\tilde{C}(p)\setminus 
\tilde{J}(p))$, then either $\gamma$ can be lifted on the 
whole interval $[0,d(q,x_0)]$ such that the endpoint of the 
lift is a regular point of $\exp_p$, or $\gamma$ can be extended to a 
geodesic from $q$ to $p$ that goes through $x_0$.

\begin{lemma}\label{lem:propertyoflift}
Assume that there is a minimal geodesic $\alpha:[0,d(p,x_0)]\to M$ 
from $p=\alpha(0)$ to $x_0=\alpha(d(p,x_0))$ along which $p$ is 
not conjugate to $x_0$. Let $w=d(p,x_0)\alpha'(0)\in T_pM$, where 
$\alpha'(0)$ is the unit tangent vector of $\alpha$ at $p$.
Then
\begin{enumerate}
 \item[$(\ref{lem:propertyoflift}.1)$] either $\gamma$ and $\alpha$ 
form a whole geodesic at $x_0$;
 \item[$(\ref{lem:propertyoflift}.2)$] or there is a unique smooth 
lift 
$\tilde\gamma:[0,d(q,x_0)]\to T_pM$ of $\gamma:[0,d(q,x_0)]\to M$ in 
the tangential segment domain $\tilde D_p\subset T_pM$ such that 
$\tilde\gamma(0)=(\exp_p|_{\tilde{D}_p})^{-1}\gamma(0)$
and $\tilde\gamma(d(q,x_0))=w$.
\end{enumerate}
\end{lemma}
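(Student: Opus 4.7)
The plan is to prove the contrapositive: if $\alpha$ and $\gamma$ do not meet smoothly at $x_0$ (so $(\ref{lem:propertyoflift}.1)$ fails), then the natural lift of $\gamma$ extends to $w$, giving $(\ref{lem:propertyoflift}.2)$. Since $p$ is not conjugate to $x_0$ along $\alpha$, $d\exp_p$ is an isomorphism at $w$, so there exist neighborhoods $U\ni w$ in $T_pM$ and $V=\exp_p(U)\ni x_0$ in $M$ on which $\exp_p$ is a diffeomorphism with inverse $\phi$. For $t$ close to $d(q,x_0)$ one has $\gamma(t)\in V$, and $\tilde\gamma_{\mathrm{end}}(t):=\phi(\gamma(t))$ is a smooth candidate lift of $\gamma$ with $\tilde\gamma_{\mathrm{end}}(d(q,x_0))=w$. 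Since the natural lift $\tilde\gamma(t):=(\exp_p|_{\tilde D_p})^{-1}\gamma(t)$ is the unique preimage of $\gamma(t)$ lying inside $\tilde D_p$, conclusion $(\ref{lem:propertyoflift}.2)$ reduces to showing $\tilde\gamma_{\mathrm{end}}(t)\in\tilde D_p$ for $t$ close to $d(q,x_0)$, equivalently $\|\tilde\gamma_{\mathrm{end}}(t)\|=d(p,\gamma(t))$.

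Suppose for contradiction that $\|\tilde\gamma_{\mathrm{end}}(t_n)\|>d(p,\gamma(t_n))$ along some $t_n\nearrow d(q,x_0)$, and let $\delta_n$ be the unique minimal geodesic from $p$ to $\gamma(t_n)$. By Arzel\`a--Ascoli, after passing to a subsequence, $\delta_n\to\beta$ with $\beta$ a minimal geodesic from $p$ to $x_0$; injectivity of $\exp_p|_U$ rules out $\beta=\alpha$, so $\beta\neq\alpha$. Two ingredients now combine. Firstly, the function $h(t):=\|\tilde\gamma_{\mathrm{end}}(t)\|-d(p,\gamma(t))$ is nonnegative near $d(q,x_0)$ and vanishes there, so every subsequential left slope at $d(q,x_0)$ is $\leq 0$; the Gauss lemma applied to $\phi$ (for the $\tilde\gamma_{\mathrm{end}}$ term) and the first variation formula along $\delta_t\to\beta$ (for the $d(p,\gamma(t))$ term) yield
\[
\langle\alpha'(d(p,x_0))-\beta'(d(p,x_0)),\,\gamma'(d(q,x_0))\rangle\leq 0.
\]
Secondly, $x_0$ is a local minimum of $F_{p;q}$ on $C_p$. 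In the generic situation where $C_p$ near $x_0$ is the smooth hypersurface consisting of points with two minimal geodesics to $p$ (one close to $\alpha$, one close to $\beta$), the criticality of $F_{p;q}|_{C_p}$ forces $\alpha'(d(p,x_0))+\gamma'(d(q,x_0))$ into $\mathrm{span}\bigl(\alpha'(d(p,x_0))-\beta'(d(p,x_0))\bigr)$. Combined with $|\gamma'(d(q,x_0))|=1$, this leaves only $\gamma'(d(q,x_0))=-\alpha'(d(p,x_0))$ (the excluded alternative $(\ref{lem:propertyoflift}.1)$) or $\gamma'(d(q,x_0))=-\beta'(d(p,x_0))$. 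In the latter, the displayed inequality becomes $\langle\alpha'(d(p,x_0)),\gamma'(d(q,x_0))\rangle\leq -1$, which by Cauchy--Schwarz forces $\gamma'(d(q,x_0))=-\alpha'(d(p,x_0))$, again contradicting our assumption.

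The main difficulty I foresee is the variational step when $C_p$ near $x_0$ fails to be a smooth hypersurface -- for instance when $\beta$ is itself conjugate to $p$ at $x_0$, or when more than two minimal geodesics from $p$ terminate at $x_0$. In such cases the hypersurface description of $C_p$ must be replaced by a Dini-type one-sided directional analysis of $F_{p;q}$ at $x_0$, taking infima over all minimal directions at $x_0$ and over all subsequential limits $\beta$ of $\delta_n$; the algebraic conclusion $\gamma'(d(q,x_0))=-\alpha'(d(p,x_0))$ should still persist after these modifications. The reduction from a global to a local minimum of $F_{p;q}$ is routine, since every manipulation above is localized in a neighborhood of $x_0$ in $C_p$.
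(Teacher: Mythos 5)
Your strategy --- extend the lift of $\gamma$ through the endpoint using the local inverse $\phi$ of $\exp_p$ at the regular point $w$, and rule out $\|\phi(\gamma(t))\|>d(p,\gamma(t))$ by a first-variation argument --- is genuinely different from the paper's, and its decisive step has a real gap. The inequality $\langle\alpha'-\beta',\gamma'\rangle\le 0$ coming from $h\ge 0$ is plausible (modulo a uniform first-variation estimate along the $\delta_n$), but your second ingredient, extracting $\alpha'(d(p,x_0))+\gamma'(d(q,x_0))\in\operatorname{span}\bigl(\alpha'(d(p,x_0))-\beta'(d(p,x_0))\bigr)$ from minimality of $F_{p;q}$ on $C_p$, is justified only under the assumption that $C_p$ near $x_0$ is a smooth hypersurface cut out by exactly two non-conjugate branches of $d(p,\cdot)$. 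That is precisely the situation this lemma is designed to go beyond: $x_0$ may be conjugate to $p$ along $\beta$ and along every minimal geodesic other than $\alpha$, there may be more than two (even infinitely many) minimal geodesics from $p$ to $x_0$, and near such a point $C_p$ need not be a hypersurface or a manifold at all, so the Lagrange-multiplier algebra has no a priori meaning (the tangent structure of $C_p$ at $x_0$ is unknown). In addition, $x_0$ may lie in $C_q$, so $d(q,\cdot)$ need not be differentiable at $x_0$ and even the ``gradient'' of $F_{p;q}$ must be replaced by one-sided derivatives over all minimal geodesics from $q$ (this could be repaired by first moving $q$ along $\gamma$, as the paper does via $(\ref{lem:propertyoflift}.4)$--$(\ref{lem:propertyoflift}.5)$, but you do not address it). You explicitly acknowledge the degenerate cases and assert that a Dini-type analysis ``should still persist,'' but no argument is given; since these are exactly the hard cases, the proposal as written proves only a generic instance of the lemma.

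For comparison, the paper's proof avoids any structure theory of $C_p$ at $x_0$. After reducing to the case that $\gamma$ is the unique minimal geodesic from $q$ to $x_0$ (replace $q$ by an interior point $\gamma(t_1)$; $x_0$ is still a minimum point of $F_{p;q_1}$), it takes interior points $\alpha(s_i)\to x_0$ and minimal geodesics $\gamma_{s_i}$ from $q$ to $\alpha(s_i)$. Since $\alpha$ and $\gamma$ are broken at $x_0$, one has $F_{p;q}<\min_{C_p}F_{p;q}$ along each $\gamma_{s_i}$, so these geodesics avoid $C_p$ entirely and lift into $\tilde D_p$, with lifted endpoints $s_i\alpha'(0)\to w$. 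The lifts stay a definite distance from $\tilde J_p$ (near their ends because $d\exp_p$ is nonsingular at $w$; elsewhere because $\gamma|_{[0,d(q,x_0)-\delta]}$ lies in the open segment domain), hence are uniformly Lipschitz, and Arzel\`a--Ascoli produces the lift of $\gamma$ with endpoint $w$. If you wish to rescue your route, what is missing is a substitute for the hypersurface/criticality step valid for arbitrary cut-locus structure; the paper's limit-of-lifts argument is, in effect, that substitute.
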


\begin{proof}
We first prove the case that $\gamma$ is the unique minimal geodesic 
from $q$ to $x_0$.
Assume that $\gamma$ and $\alpha$ do not form a whole geodesic at 
$x_0$. Let $\{\alpha(s_i)\}$ $(0<s_i<d(p,x_0))$ be a sequence 
of interior points of $\alpha$ that converges to $x_0$ as 
$i\to\infty$, and let
$\gamma_{s_i}:[0,d(q,\alpha(s_i))]\to M$ be a minimal geodesic from 
$q$ to $\alpha(s_i)$. Then $\gamma_{s_i}$ converges to $\gamma$ 
as $i\to \infty$.

Because $\gamma$ and $\alpha$ are broken at $x_0$,
by the triangle inequality and similar calculation in $(2.1)$, for 
each $s_i$, we have 
$$F_{p;q}(\alpha(s_i))<F_{p;q}(x_0)=\min 
F_{p;q},$$
and thus for any $0\le t\le d(q,\alpha(s_i))$,
\begin{align*}
F_{p;q}(\gamma_{s_i}(t))\le F_{p;q}(\alpha(s_i))<\min 
F_{p;q},\tag{$\ref{lem:propertyoflift}.3$}
\end{align*}
which implies that none of points in $\gamma_{s_i}$ is a cut point of 
$p$. Therefore there is a unique lift curve $\tilde\gamma_{s_i}$ of 
$\gamma_{s_i}$ starting at $(\exp_p|_{\tilde{D}_p})^{-1}\gamma(0)$ 
in the tangential segment 
domain $\tilde D_p\subset T_pM$.

If $\{\tilde\gamma_{s_i}\}$ is uniformly Lipschitz, then by
the Arzel\`{a}-Ascoli theorem, a subsequence of 
$\{\tilde\gamma_{s_i}\}$ 
converges to a continuous 
curve $\tilde\gamma_\infty:[0,d(q,x_0)]\to T_pM$, which  
satisfies that
$$\tilde\gamma_\infty(0)=(\exp_p|_{\tilde{D}_p})^{-1}\gamma(0), \quad
\tilde\gamma_\infty(d(q,x_0))=w,$$ and
$$\exp_p(\tilde\gamma_\infty(t))=\lim_{i\to\infty}\gamma_{s_i}
(t)=\gamma(t), \text{ for all $0\le t\le d(q,x_0)$.}$$ 
That is, $\gamma$ 
has a unique lifting in the segment domain that satisfies 
$(\ref{lem:propertyoflift}.2)$.

To prove that
$\{\tilde\gamma_{s_i}\}$ is uniformly Lipschitz, it suffices to show 
that
there is $N>0$ such that the
distance between $\bigcup_{i\ge N}
\tilde\gamma_{s_i}([0,d(q,\alpha(s_i))])$ and tangential conjugate 
locus $\tilde J_p\subset T_pM$ is positive. 
Indeed, because $\tilde\gamma_{s_i}(d(q,\alpha(s_i)))$ converges to 
$w=d(p,x_0)\alpha'(0)$, at which the differential $d(\exp_p)$ is 
non-singular, there is 
a small $\delta>0$ and some $\epsilon>0$ such that 
$$d(\tilde\gamma_{s_i}(t),\tilde J_p)>\epsilon,\text{ for all 
$d(q,\alpha(s_i))-\delta\le t\le d(q,\alpha(s_i))$ and large 
$i$}.$$
On the other hand, because the restriction 
$\left.\gamma_{s_i}\right|_{[0,d(q,\alpha(s_i))-\delta]}$ converges 
to $\left.\gamma\right|_{[0,d(q,x_0)-\delta]}$, which lies in the 
segment domain $D_p=\exp_p\tilde D_p\subset M$, there is some 
$\epsilon_1>0$ such that
$$d(\tilde\gamma_{s_i}(t),\tilde J_p)>\epsilon_1,\text{ for all 
$0\le t\le d(q,\alpha(s_i))-\delta$ and large $i$}.$$

Now what remains is to prove the case that the minimal 
geodesic from $q$ to $x_0$ is not unique.
Let us fix some interior point $q_1=\gamma(t_1)$ $(0<t_1<d(q,x_0))$ 
of $\gamma$ and consider the function $F_{p;q_1}:C_p\to \Bbb R$ 
instead.
Because 
\begin{align*}
F_{p;q_1}(x)&=d(q_1,x)+d(x,p)\\
&\ge d(x,q)-d(q,q_1)+d(x,p)\\
&=F_{p;q}(x)-d(q,q_1) \tag{$\ref{lem:propertyoflift}.4$}
\end{align*}
and 
\begin{align*}
F_{p;q_1}(x_0)&=d(x_0,q)-d(q,q_1)+d(x_0,p)\\
&=F_{p;q}(x_0)-d(q,q_1), \tag{$\ref{lem:propertyoflift}.5$}
\end{align*}
we see that $F_{p;q_1}(x)$ also takes minimum at $x_0$. Now the 
minimal 
geodesic from $q_1$ to $x_0$ is unique. By the same argument as 
above,  
either $\left.\gamma\right|_{[t_1,d(q,x_0)]}$ from a whole 
geodesic with $\alpha$ at $x_0$, or it has a unique lift in the 
tangential segment domain. So does $\gamma$.
\end{proof}

A local minimum point of $F_{p;q}$ can be reduced to the case of 
global minimum by the following lemma.
\begin{lemma}\label{lem:localminimum}
Let $x_0$ be a local minimum point of $F_{p;q}:C_p\to \mathbb{R}$ in 
$C_p$, and $\gamma:[0,d(q,x_0)]\to M$ be a minimal geodesic from 
$q=\gamma(0)$ to $x_0=\gamma(d(q,x_0))$. Then for any interior point 
$q_t=\gamma(t)$ of $\gamma$ that is sufficient close to $x_0$, the 
function $F_{p;q_t}:C_p\to \Bbb R$ takes its minimum at $x_0$.
\end{lemma}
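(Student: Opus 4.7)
The plan is to bound $F_{p;q_t}(x)-F_{p;q_t}(x_0)$ from below uniformly in $x\in C_p$, splitting $C_p$ into three regimes according to $d(x,x_0)$ and applying two slightly different triangle inequalities.

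Since $q_t=\gamma(t)$ lies on a minimizing geodesic from $q$ to $x_0$, one has $d(q,q_t)=t$ and $d(q_t,x_0)=d(q,x_0)-t$, giving the exact identity $F_{p;q_t}(x_0)=F_{p;q}(x_0)-t$. Applying the triangle inequality to the triples $(q,q_t,x)$ and $(x_0,q_t,x)$ yields the two lower bounds
\[
d(q_t,x)\ge d(q,x)-t,\qquad d(q_t,x)\ge d(x_0,x)-\bigl(d(q,x_0)-t\bigr),
\]
which translate into
\[
F_{p;q_t}(x)-F_{p;q_t}(x_0)\ge F_{p;q}(x)-F_{p;q}(x_0)\quad\text{and}\quad F_{p;q_t}(x)-F_{p;q_t}(x_0)\ge \delta(x)-2\bigl(d(q,x_0)-t\bigr),
\]
where $\delta(x):=d(p,x)+d(x,x_0)-d(p,x_0)\ge 0$.

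Fix $\rho>0$ witnessing the local-minimum hypothesis and choose $R>d(p,x_0)+d(q,x_0)$. For $x\in C_p\cap\overline{B_\rho(x_0)}$ the first lower bound combined with the local minimum property of $F_{p;q}$ at $x_0$ already gives $F_{p;q_t}(x)\ge F_{p;q_t}(x_0)$, uniformly in $t$. For $x\in C_p$ with $d(x,x_0)\ge R$, the first lower bound together with the crude estimate $F_{p;q}(x)\ge 2d(x,x_0)-d(p,x_0)-d(q,x_0)$ and the choice of $R$ again gives the inequality uniformly in $t$. On the remaining annular region $A:=C_p\cap\bigl(\overline{B_R(x_0)}\setminus B_\rho(x_0)\bigr)$ I would use the second lower bound, combined with a uniform positivity of $\delta$ on $A$, and this is the step that forces $t$ to be taken sufficiently close to $d(q,x_0)$.

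The main obstacle is the annulus. Pointwise, $\delta(x)>0$ on $A$: if $\delta(x)=0$ at some $x\in A$ then $x$ would lie on a minimizing geodesic from $p$ to $x_0$, and since $x\ne p$ (because $p\notin C_p$) and $x\ne x_0$ (because $d(x,x_0)\ge\rho$), $x$ would be an interior point of that geodesic, contradicting $x\in C_p$, since cut points of $p$ cannot be interior points of minimizing geodesics from $p$. The upgrade to a uniform lower bound $\delta_0:=\min_A\delta>0$ then uses compactness of $A$, which follows from Hopf--Rinow (applied to $\overline{B_R(x_0)}$) together with the standard fact that the cut locus $C_p$ is closed. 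With $\delta_0$ in hand, for every $t$ with $d(q,x_0)-t<\delta_0/2$ the second lower bound yields $F_{p;q_t}(x)\ge F_{p;q_t}(x_0)$ on $A$, and assembling the three regimes proves the lemma.
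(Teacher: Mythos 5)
Your proof is correct, but it takes a genuinely different route from the paper. The paper first notes (via its estimates $(\ref{lem:propertyoflift}.4)$--$(\ref{lem:propertyoflift}.5)$, which are exactly your first triangle-inequality bound) that $x_0$ remains a local minimum of $F_{p;q_t}$, and then argues by contradiction with a limiting argument: if for a sequence $q_i\to x_0$ the global minimum of $F_{p;q_i}$ were attained at points $z_i$ outside a fixed ball around $x_0$, a subsequential limit $z_0\in C_p$ would satisfy $d(p,z_0)+d(z_0,x_0)\le d(p,x_0)$ and hence be an interior point of a minimal geodesic from $p$ to $x_0$, which is impossible for a cut point. You instead give a direct, quantitative argument: splitting $C_p$ into a small ball (handled by the local-minimum hypothesis), a far region (handled by a crude growth estimate), and a compact annulus on which the excess $\delta(x)=d(p,x)+d(x,x_0)-d(p,x_0)$ has a positive minimum $\delta_0$, and then requiring $d(q,x_0)-t<\delta_0/2$. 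The core geometric input is the same in both proofs (closedness of $C_p$, Hopf--Rinow compactness of closed balls, and the fact that a cut point of $p$ cannot be interior to a minimizing geodesic from $p$), but your version avoids the paper's implicit reliance on the existence and boundedness of global minimizers $z_i$ of $F_{p;q_i}$, and it produces an explicit closeness threshold for $q_t$, at the cost of being somewhat longer than the paper's contradiction argument.
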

\begin{proof}
By $(\ref{lem:propertyoflift}.4)$ and 
$(\ref{lem:propertyoflift}.5)$, $x_0$ is also a local minimum 
point of $F_{p;q_t}$ for any $t\in [0,d(q,x_0))$.
Therefore, it suffices to show that $F_{p;q_t}$ takes its minimum 
near $x_0$ as $q_t$ sufficient close to $x_0$. 

Let us argue by contradiction. Assuming the contrary, one 
is able to find a sequence of points $\{q_i=\gamma(t_i)\}$ that 
converges to $x_0$ such that $F_{p;q_i}$ takes its minimum at some 
point $z_i\in C_p$ outside an open ball of $x_0$,  
$$B_\epsilon(x_0)=\{x\in M\,|\, d(x,x_0)<\epsilon\}.$$
By passing to a subsequence, we assume that $z_i\to 
z_0\in C_p$. Then for any $y\in C_p$,
$$d(p,z_i)+d(z_i,q_i)=F_{p;q_i}(z_i)\le 
F_{p;q_i}(y)=d(p,y)+d(y,q_i).$$
Taking limit of the above inequality, we get
\begin{align*}
F_{p;x_0}(z_0)\le F_{p;x_0}(y), \quad 
\text{ for any $y\in C_p$}.
\end{align*}
Let $y=x_0$, then 
$$d(p,z_0)+d(z_0,x_0)\le F_{p;x_0}(x_0)=d(p,x_0).$$
Because $d(z_0,x_0)\ge \epsilon$, this implies that $z_0$ is an 
interior point of a minimal geodesic from $p$ to $x_0$, and it 
contradicts to the fact that $z_0\in C_p$.
\end{proof}

We now are ready to prove 
Theorem~A.
\begin{proof}[Proof of Theorem~A]
Let $x_0\in C_p$ be a local minimum point of the function 
$$F_{p;q}:C_p\to \Bbb R, 
\quad F_{p;q}(x)=d(x,q)+d(x,p).$$ 
Let $\alpha:[0,d(p,x_0)]\to M$ be
a minimal geodesic from 
$p=\alpha(0)$ to $x_0=\alpha(d(p,x_0))$, along which $p$ is not 
conjugate to $x_0$. Let $\gamma:[0,d(q,x_0)]\to M$ be a 
minimal geodesic from $q=\gamma(0)$ to $x_0=\gamma(d(q,x_0))$, and
$w=d(p,x_0)\alpha'(0)$.

First let us prove that there is a minimal geodesic from $p$ to 
$x_0$ that forms a whole geodesic with $\gamma$. 
According to Lemma~\ref{lem:localminimum},
by moving $q$ to an interior point in $\gamma$ that is sufficient 
close to $x_0$ and denoted also by $q$, it can be reduced to the case 
that $x_0$ is a minimal point of $F_{p;q}$ and $\gamma$ is a 
unique minimal geodesic connecting $q$ and $x_0$. 

By 
Lemma~$\ref{lem:propertyoflift}$, if $\alpha$ and $\gamma$ are broken 
at 
$x_0$, then $\gamma$ has a unique lift $\tilde\gamma:[0,d(q,x_0)]\to 
T_pM$ in the tangential segment domain $\tilde D_p\subset T_pM$, 
whose endpoint satisfies 
$$\tilde\gamma(d(q,x_0))=w.$$
Because $x_0$ is not conjugate to $p$ along $\alpha$, there is 
another minimal geodesic $\beta:[0,d(p,x_0)]\to M$ from $p$ to $x_0$. 
We now prove that $\beta$ must form a whole geodesic with $\gamma$.

Assuming the contrary, that is, $\beta$ does not form a whole 
geodesic with $\gamma$ neither. For any interior point $\beta(s)$  
$(0<s<d(p,x_0))$ in $\beta$, let $l_s=d(q,\beta(s))$ 
and $\gamma_s:[0,l_s]\to M$ be a 
minimal geodesic from $q$ to $\beta(s)$. Then by the same argument 
as $(\ref{lem:propertyoflift}.3)$, for any $0\le t\le l_s$ we have
$$F_{p;q}(\gamma_{s}(t))\le F_{p;q}(\beta(s))<F_{p;q}(x_0)=\min 
F_{p;q},$$
which implies that $\gamma_s$ has a unique lift $\tilde\gamma_s$ in 
the tangential segment domain $\tilde D_p\subset T_pM$.

We point it out that,
because the endpoint $\tilde\gamma(l_s)$ of $\tilde\gamma_s$ may
approach $\tilde J_p\subset T_pM$,
one cannot directly conclude that the family of curves 
$\tilde\gamma_s$ 
contains any convergent subsequence as $s\to d(p,x_0)$.
Instead, let us consider a small metric ball 
$B_\epsilon(w)$ at $w=d(p,x_0)\alpha'(0)$ in $T_pM$ on 
which 
the restriction of $\exp_p$
$$\left.\exp_p\right|_{B_\epsilon(w)}:
B_\epsilon(w)\to M$$
is an diffeomorphism onto its image.
Then $d(p,x_0)\beta'(0)\not\in B_\epsilon(w)$.
Because $\gamma_s$ converges to $\gamma$ as $s\to d(p,x_0)$ and
the restriction $\left.\exp_p\right|_{\tilde D_p}$ of $\exp_p$ on the 
tangential 
segment domain $\tilde D_p$ is a diffeomorphism, 
$\left.\tilde\gamma_s\right|_{[0,l_s)}$ 
converges to $\left.\tilde\gamma_s\right|_{[0,d(q,x_0))}$ 
pointwisely, that is, for $0<t<1$,
$$\tilde\gamma_s(l_s\cdot t)\to 
\tilde\gamma(d(q,x_0)\cdot t)\text{ as $s\to d(p,x_0)$.}$$
 Then
for any $0<t_1<1$ that is sufficient close to $1$, there 
exists $0<s(t_1)<d(p,x_0)$ such that for all $s(t_1)<s<d(p,x_0)$,
$$\tilde\gamma_s(l_st_1)\in 
B_{\frac{\epsilon}{4}}(w),\quad  
\tilde\gamma_s(l_s)\in 
B_{\frac{\epsilon}{2}}(d(p,x_0)\beta'(0)),$$ and 
$\left.\gamma_s\right|_{[l_st_1,l_s]}$ lies in the open 
neighborhood $\exp_p 
B_{\frac{\epsilon}{4}}(w)$ of $x_0$.
Because the lift of 
$\left.\gamma_s\right|_{[l_st_1,l_s]}$ is unique in 
$\tilde D_p$ and $\tilde\gamma_s(l_st_1)\in 
B_{\frac{\epsilon}{4}}(w)$, we conclude that 
$\left.\tilde\gamma_s\right|_{[l_st_1,l_s]}$ lies in 
$B_{\frac{\epsilon}{4}}(w)$, which contradicts to 
the fact that 
$$B_{\frac{\epsilon}{4}}(w)\cap 
B_{\frac{\epsilon}{2}} (d(p,x_0)\beta'(0))=\emptyset.$$

Secondly, we prove that there are at most two geodesics from $q$ to 
$p$ passing through $x_0$. Indeed, if there are two distinct minimal 
geodesics $\gamma_1$, $\gamma_2$ 
connecting $q$ and $x_0$, then one of them, say $\gamma_1$, does not 
form a whole geodesic with $\alpha$. 
By the proof above, 
any other minimal geodesic except $\alpha$ 
connecting $p$ and $x_0$ from a whole geodesic with $\gamma_1$.
By Lemma~\ref{lem:propertyoflift}, $\gamma_2$ form a whole geodesic 
with $\alpha$.
Therefore, there are exactly two minimal geodesics $\alpha$ 
and $\beta$ from $p$ to $x_0$,  the union of $\beta$ and 
$\gamma_1$ form a whole geodesic going through $x_0$, and 
$\alpha$ and $\gamma_2$ form another whole geodesic.
In particular, there won't be a third geodesic 
from $q$ to $x_0$. 

\end{proof}

Theorem~B and Theorem~\ref{thm:newcharacter} are immediate 
corollaries of Theorem~A. We give a proof of 
Theorem~\ref{thm:finiteness} to end the paper.

\begin{proof}[Proof of Theorem~\ref{thm:finiteness}]
Let $\Cal M(n,k,D,v,r)$ be the set consisting of all 
complete $n$-dimensional Riemannian manifolds
whose sectional curvature $\ge k$, diameter $\le D$, 
volume $\ge v$ and essential conjugate radius $\ge r$.
Let $M\in \Cal M(n,k,D,v,r)$. 
By Cheeger's lemma (\cite{Cheeger1970finiteness}, see Theorem~5.8 in 
\cite{CheegerEbin1975comparison}), there is an universal constant 
$c_n(D,v,k)>0$ depending only on $n,D,v,k$ such that
every smooth closed geodesic on $M$ has length $>c_n(D, v, k)$.
Therefore, by Theorem~\ref{thm:injrad} the injectivity radius of $M$ 
is bounded below by $\min\{r, \frac{1}{2}c_n(D,v,k)\}$. The 
finiteness of diffeomorphism classes in the set $\Cal M(n,k,D,v,r)$ 
follows from the standard argument (see \cite{Fukaya1987collapsing, 
Yamaguchi1991collapsing} for example) on the construction of a 
diffeomorphism between two Riemannian manifolds with small 
Gromov-Hausdorff distance .
\end{proof}

\bibliographystyle{plain}
\bibliography{conjugateloops}

\begin{thebibliography}{10}

\bibitem{Abresch1997injectivity}
U.~Abresch and W.~T. Meyer.
\newblock Injectivity radius estimates and sphere theorems.
\newblock {\em Comparison geometry}, 30:1--47, 1997.

\bibitem{Cheeger1970finiteness}
J.~Cheeger.
\newblock Finiteness theorems for riemannian manifolds.
\newblock {\em Am. J. Math.}, 92(1):61--74, 1970.

\bibitem{CheegerEbin1975comparison}
J.~Cheeger and D.G. Ebin.
\newblock {\em Comparison theorems in Riemannian geometry}, volume 365.
\newblock AMS Bookstore, 1975.

\bibitem{CheegerGromoll1972soul}
J.~Cheeger and D.~Gromoll.
\newblock On the structure of complete manifolds of nonnegative curvature.
\newblock {\em Ann. of Math.}, 96(3):413--443, 1972.

\bibitem{Fukaya1987collapsing}
K.~Fukaya.
\newblock Collapsing riemannian manifolds to ones of lower dimensions.
\newblock {\em J. Differential Geom.}, 25(1):139--156, 1987.

\bibitem{GromollMeyer1969positive}
D.~Gromoll and W.~Meyer.
\newblock On complete open manifolds of positive curvature.
\newblock {\em Ann. of Math.}, 90(1):75--90, 1969.

\bibitem{InnamiShiohamaSoga2012pole}
N.~Innami, K.~Shiohama, and T.~Soga.
\newblock The cut loci, conjugate loci and poles in a complete riemannian
  manifold.
\newblock {\em Geom. Funct. Anal.}, 22(5):1400--1406, 2012.

\bibitem{Klingenberg1959sphere}
W.~Klingenberg.
\newblock Contributions to riemannian geometry in the large.
\newblock {\em Ann. of Math.}, 69(3):654--666, 1959.

\bibitem{Klingenberg1995riemannian}
W.~Klingenberg.
\newblock {\em Riemannian geometry}.
\newblock Walter de Gruyter, 1995.

\bibitem{Wang2004largeradius}
Q.~Wang.
\newblock On the geometry of positively curved manifolds with large radius.
\newblock {\em Illinois J. Math.}, 48(1):89--96, 01 2004.

\bibitem{Xia2006roundsphere}
Changyu Xia.
\newblock A round sphere theorem for positive sectional curvature.
\newblock {\em Compositio Math.}, 142(5):1327--1331, 2006.

\bibitem{Yamaguchi1991collapsing}
T.~Yamaguchi.
\newblock Collapsing and pinching under a lower curvature bound.
\newblock {\em Ann. of Math.}, 133(2):317--357, 1991.

\end{thebibliography}

\end{document}